\documentclass[10pt,onecolumn,aps,prd,floats,floatfix,superscriptaddress,nofootinbib,showkeys]{revtex4}
\usepackage{amsmath,amsfonts,dsfont,amsthm,amssymb}
\usepackage{caption}
\usepackage{mathtools}
\usepackage{multirow}
\usepackage{graphicx}
\usepackage{subfig}
\usepackage{color}
\usepackage{epsfig}
\usepackage{verbatim}
\usepackage{hyperref}
\usepackage{bm}
\theoremstyle{plain}
\newtheorem{theorem}{Theorem}[section]

\theoremstyle{definition}
\newtheorem{definition}{Definition}[section]
\newtheorem{example}[subsubsection]{Example}
\theoremstyle{remark}

\numberwithin{equation}{section}

\begin{document}
\title{Quasi-Harmonic Constraints for Toric B{\'e}zier Surfaces }
\author{Daud Ahmad}
\thanks{daud.math@pu.edu.pk}
\thanks{One of the authors Daud Ahmad acknowledges the financial support of University of the Punjab, Research Project Grant 2015-16 No. D/999/Est.1 .}
\affiliation {Department of Mathematics, University of the Punjab, Lahore, Pakistan}
\author{Saba Naeem}
\thanks{saba.baloch289@gmail.com}
\affiliation {Department of Mathematics, University of the Punjab, Lahore, Pakistan}

\begin{abstract}
Toric B\'{e}zier patches generalize the classical tensor-product  triangular and rectangular B{\'e}zier surfaces, extensively used in $CAGD$. The construction of toric B\'{e}zier surfaces corresponding to multi-sided convex hulls for known boundary mass-points with integer coordinates (in particular for trapezoidal and hexagonal convex hulls) is given. For these toric B\'{e}zier surfaces, we find approximate minimal surfaces obtained by extremizing the quasi-harmonic energy functional. We call these approximate minimal surfaces as the quasi-harmonic toric B\'{e}zier surfaces. This is achieved  by imposing  the vanishing condition of gradient of the quasi-harmonic functional and obtaining  a set of linear constraints on  the unknown inner mass-points of the toric B\'{e}zier patch for the above mentioned convex hull domains, under which they are quasi-harmonic toric B\'{e}zier patches. This gives us the solution of the \textit{Plateau toric B\'{e}zier problem} for these illustrative instances for known convex hull domains.
\end{abstract}
\keywords {Harmonicity, Minimal Surfaces, Toric B\'{e}zier patches.}
\maketitle
\section{Introduction}\label{intro}
The theory of minimal surfaces  has its roots in the optimization problems of calculus of variations, based on the famous Euler- Lagrange equation
which is a second order partial differential equation (\emph{pde}).  The solution of  the  Euler-Lagrange equation targets to find a function that extremizes a given functional and has many  applications in the optimization theory. Many mathematicians have contributed to the subject of  optimization theory  and it has become a widely accepted discipline of Mathematics and Physics.  A minimal surface is a surface which locally minimizes its area or equivalently a surface whose mean curvature vanishes everywhere on the surface. In the similar context, a  problem known as the Plateau problem \cite{Osserman1986,Nitsche1989} consists of finding the surface with least surface area bounded by a given boundary curve. It is named after Belgian physicist Joseph. A. Plateau~\cite{Plateau}
who experimentally demonstrated in 1849 that minimal surfaces can be associated to the soap films spanned by wire frames of different shapes. In the meantime, many mathematicians developed their interest in finding a minimal surface spanned by a fixed boundary curve such as Schwarz \cite{Schwarz} (who studied the triply periodic surfaces namely the CLP (crossed layers of parallels), D (diamond), P (primitive), H (hexagonal) and  T (tetragonal) surfaces, Weierstrass \cite{Osserman1986}, Riemann \cite{Osserman1986}  and R. Garnier \cite{Garnier} in the late 19th century. However, these were minimal surfaces for particular boundaries, until in 1931, American mathematician J. Douglas \cite{Douglas} and in 1933 Hungarian Tibor Rad\'o \cite{Rado} independently proved the existence of a minimal surface spanned by a closed curve by replacing the area functional by rather a simpler integral, now known as the Douglas-Dirichlet functional. The Douglus-Dirichlet functional does not have square root in its integrand as is the case with the area functional which makes it a suitable choice  as an alternative to the area functional.

Exact mathematical solutions are known only for some specific boundaries. It is possible to find numerically the solution of a wide variety of problems giving rise to approximate minimal surfaces. Coppin and Greenspan \cite{COPPIN1988315} used a computer model of molecular structure and forces to approximate a minimal surface. K. Koohestani \cite{Koohestani20142071} also suggested the method involving non-linear force density to find minimal surfaces for membrane structures. Brakke \cite{brakke1992} used the finite element method to approximate parameterized minimal surfaces.  Level set method was proposed by Chopp \cite{Chopp199377} to cope with topological variations of a surface under linear convergence, whereas a variational approach to minimize the area of triply periodic surfaces was proposed by Jung \emph{et al.} \cite{Jung:2007}. R{\o}nquiust and Tr{\aa}sdahl \cite{Trasdahl20114795} introduced an iterative scheme which involves parameterization of higher order polynomials to achieve a numerical approximation of a minimal surface with fixed boundaries.  Similarly, Li \emph{et al.} \cite{Li20136415} numerically approximated the minimal surfaces with geodesic constraints over boundary curves. Kassabov \cite{Kassabov2014441} derived an equation of a canonical parameterized minimal surface and also pointed out its application. Xu \emph{et al.} \cite{Xu:2015:EFP:2778896.2779237} proposed a parametric form of polynomial minimal surface with varying degrees which posses interesting properties helpful for geometric modeling in CAD.

Alternative energy functionals for minimization may  be used to find an approximate minimal surface of a certain restricted class of surfaces. One of the widely used restriction is to find a minimal  B\'{e}zier surface among all the B\'{e}zier surfaces
\begin{equation}\label{beziersurface}
 \mathbf{x}(u,v)=\sum\limits_{i=0}^{n}{\sum\limits_{j=0}^{m}{B_{i,j}^{n,m}\left( u,v \right)}}\text{ }{{\mathbf{P}}_{ij}},
\end{equation}
with
\begin{equation}\label{classicalbernpolys1}
B_{i,j}^{n,m}\left( u,v \right)=B_{i}^{n}\left( u \right)B_{j}^{m}\left( v \right),
\end{equation}
spanned by a given boundary in which  $\mathbf{P}_{ij}$ represents a two dimensional control net over the domain $D = [0,1] \times [0,1]$ with $u,v$ as the surface parameters, the bivariate functions $\{B_{i,j}^{n,m}\left( u,v \right):{{\mathbf{R}}^{2}}\to \mathbf{R}\}$
are the blending functions to specify the shape of the surface and
\begin{equation}\label{classicalbernpolys2}
B_{i}^{n}\left( u \right) = \binom {n}{i}{{u}^{i}}{{\left( 1-u \right)}^{n-i}}
\end{equation}
are  the Bernstein polynomials of degree $n$ with $\binom {n} {i}= \frac{n!}{i!\left( n-1 \right)!}$ as the binomial coefficients.

An extremal of discrete version of Dirichlet functional giving minimal B\'{e}zier surfaces can be seen in the Monterde work \cite{Monterde2004}. X. D. Chen, G. Xu, and Y. Wang. \cite{Chen2009} found  approximate minimal surfaces as the solution of Plateau-B\'{e}zier problem using extended Dirichlet functional and the extended bending energy functional, the surfaces depend on the parameters $\lambda$ and $\alpha$ (as they appear in eqs. (4) and (5) of the ref.\cite{Chen2009}) for simple estimates of these parameters. Hao \emph{et al.} ~\cite{Hao2012} investigated the Plateau-quasi-B\'{e}zier problem, minimizing thereby the Dirichlet functional of surfaces for  more generalized  borders including the boundary curves like polynomial curves,  catenaries and circular arcs. Another restriction could be to find a  parametric polynomial minimal surface as has been proposed by Xu and Wang~\cite{Xu2010} to obtain a minimal surface for quintic parametric polynomial surface having the prescribed borders as polynomial curves. Ahmad and Masud \cite{Ahmad201472,dabm2013,Ahmad20151242} gave an algorithm to find a quasi-minimal surface, variationally improving the non-minimal initial surface spanned by a fixed boundary composed of finite number of curves by minimizing its $rms$ mean curvature functional instead of area functional which involves a square root in its integrand and applied this technique to a variety of surfaces.
The idea may be extended to more generalized surfaces called toric B\'{e}zier surfaces to obtain a quasi-minimal surface by minimizing the quasi-harmonic functional as is done by Xu \emph{et al.} \cite{Xu2015} to find the quasi-harmonic surface as the solution of Plateau-B{\'e}zier problem. The related class of surfaces is called the harmonic mapping. The harmonic mappings find significant importance in the literature of minimal surfaces for the isothermal parameterization of the  surfaces \cite{Osserman1986,Chern1955}. This means that a positive definite metric in  two dimensions
\begin{equation}
  d{{s}^{2}}=E\left( x,y \right)d{{x}^{2}}+2F\left( x,y \right)dxdy+G\left( x,y \right)d{{y}^{2}},
\end{equation}
defined in the neighbourhood of a surface $\mathbf{x}(x,y)$ in local coordinates $\left(x,y\right)$  takes the form
\begin{equation}
  d{{s}^{2}}=\lambda^2 \left( x,y \right)\left( d{{x}^{2}}+d{{y}^{2}} \right),
\end{equation}
 ($i.e.$ $E\left( x,y \right)=G\left( x,y \right)={{\lambda }^{2}}\left( x,y \right),F\left( x,y \right)=0$)  in the isothermal coordinates $\left(x,y\right)$.
If a surface is parameterized using the isothermal parameterization ~\cite{Chern1955}, then such a parameterization is minimal if the coordinate functions are harmonic. In other words, a surface with isothermal parameterization is a minimal surface if and only if it is a harmonic surface. This is also useful in finding a minimal surface associated to a class of surfaces namely the B{\'e}zier surfaces. Monterde and Ugail~\cite{MonterdeUgail2004}  indicated that harmonic B{\'e}zier surfaces can only be specified by opposite boundary control points and thus making it impracticable to generate a harmonic B{\'e}zier surface from the prescribed four boundary B{\'e}zier curves. In order to overcome this difficulty, Xu \emph{et al.}~\cite{Xu2015} proposed the quasi-harmonic surfaces which serve as the solution surfaces for Plateau-B{\'e}zier problem. They also showed  that in particular cases when the corners of B{\'e}zier surface are almost isothermal, quasi-harmonic surfaces are better approximations when compared to surfaces generated by Dirichlet method.

Polynomial functions and splines are widely used in many structural design program softwares.   The fundamental units of modeling a surface geometrically are the classical B{\'e}zier triangles and rectangular tensor product patches \cite{FARIN198683} in computer aided geometric designing $\left(CAGD\right)$, however some applications require a more generalized form of multi-sided $C^{\infty}$ patches rather than the classical B{\'e}zier surfaces. J. Warren~\cite{Warren1994} realized the usage of real toric surfaces in $CAGD$. His notable contribution is construction of a hexagonal patch from a rational B{\'e}zier triangle with zero weights and the corresponding control points located appropriately. The multi-sided patches bear more flexibility and present interesting mathematical structures when dealt through  Krasauskas's toric B{\'e}zier patches \cite{ref1}.  Toric  B{\'e}zier patches are the generalization of  the classical B{\'e}zier patches that deal only with triangular or rectangular patches. In 2002, Krasauskas and Goldman \cite{KrausGoldman} presented the construction of toric B{\'e}zier patches of depth $d$ by using the de Casteljau pyramid algorithm and blossoming algorithm for the associated patches. In recent work by Gang Xu, Tsz-Ho Kwok and Charlie C.L. Wang \cite{XU20171}, a B-spline volumetric parameterization is constructed with semantic features for isogeometric analysis.

Further developments in toric B\'{e}zier surfaces include the work of  Garc\'ia-Puente \emph{et al.} \cite{Puente:2011}, they illustrated the geometrical importance of the structural system of toric B{\'e}zier patches, Sun and Zhu \cite{Sun2015255,Sun01062016} discussed the $G^{1}$ continuity of toric B{\'e}zier surfaces and found approximate minimal toric B\'{e}zier surfaces by minimizing the Dirichlet functional.

In this paper, we construct quasi-harmonic toric B\'{e}zier patches defined over multi-sided convex hulls with prescribed boundary mass-points by extremizing the quasi-harmonic functional to generate a system of linear equations for the unknown inner mass-points. This enables us to write down the parametric form of the solution of the Plateau-toric B\'{e}zier problem. The paper is organized as follows: In section \ref{ToricBezier}, we give the preliminary introduction to toric B{\'e}zier patch of depth $d$ in general and its construction consisting of indexing lattice polygon domains and the associated toric Bernstein polynomials. In the following sections \ref{qfunctional} and \ref{conditions}, we utilize the  quasi-harmonic energy functional as the objective functional to obtain the necessary and sufficient conditions for a toric B{\'e}zier patch to be a quasi-harmonic toric B{\'e}zier patch which serves as the solution to the Plateau-toric B\'{e}zier problem. Finally, in section \ref{examples}, we construct quasi-harmonic toric B\'{e}zier patches defined over trapezoidal convex hulls and hexagonal convex hull as illustrative applications. Constraints on mass-points of the toric B\'{e}zier patches defined over the above mentioned multi-sided domains are obtained by solving the respective systems of linear equations for the inner unknown mass-points.
For the prescribed boundary mass points, quasi-harmonic toric B\'{e}zier patches, as illustrative applications,  have also been obtained and shown that  the inner mass-points satisfy the computed constraints.
\section{Toric B{\'e}zier Patches and Related Terminology}\label{ToricBezier}
In computer aided geometric designing $\left(CAGD\right)$, three and four-sided patches namely the triangular and rectangular B{\'e}zier patches are commonly used for surface modeling but a multi-sided generalization of these B{\'e}zier schemes is required in order to fill $n$-sided holes.
One of such schemes used to define multi-sided $C^{\infty}$ patches is the Krasauskas's \textit{Toric B{\'e}zier patch} as introduced in \cite{ref1}. A scheme in section~\ref{conditions} is given to obtain quasi-harmonic toric B\'{e}zier surface by extremizing the quasi-harmonic functional  introduced in the section~\ref{qfunctional}.
To comprehend the construction of these toric B{\'e}zier patches and then to extremize a given functional to find an approximate minimal surface, we give below the related terminology for the reader to get familiar with  lattice polygons, Bernstein basis functions for these polygons,  discrete convolution indexed by Minkowski sum and finally the construction of toric B\'{e}zier patches for given depth $d$.
\begin{definition}(\textbf{Lattice Polygons}) The polygon formed by connecting the outer most sequence of points in the finite set $\sigma\in \mathbb{Z}^2$ in the plane is called the \textit{lattice polygon}. The finite set $\sigma$ is used as the index set for control points $\{P_{\sigma_i}\}_{\sigma_i\in\sigma}$ to form a polygonal array of control points.
\end{definition}
\noindent The lattice polygons for the classical tensor-product B{\'e}zier patch and triangular B{\'e}zier patch are lattice rectangle and lattice triangle respectively which form the array of their corresponding control points. Other examples of multi-sides lattice polygons are given in fig~\ref{latticepolygons}.
\begin{figure}[tbh!]
\begin{center}
\includegraphics[width=40mm]{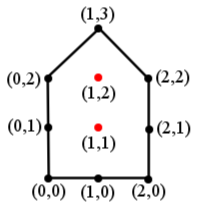}
\hspace{1cm}
\includegraphics[width=50mm]{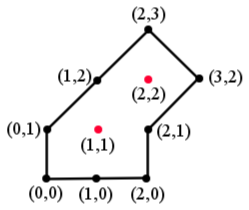}
\end{center}
\caption{Multi-sided lattice polygons, a lattice pentagon (left) and a lattice hexagon (right) with inner lattice points (red dots).}
\label{latticepolygons}
\end{figure}

\begin{definition}(\textbf{Bernstein Polynomial Functions for Lattice Polygons})\label{defn:bpfflp}
Let $\sigma=\{\sigma_1,\sigma_2,...,\sigma_m\}\in \mathds{Z}^2$ be the set of finite integers in $uv$-plane. The lattice polygon $I_\sigma$ denotes  the convex hull of $\sigma$ with corner points $v_1,v_2,...,v_n$ and $\overline{L}_k(u,v)= \alpha_ku+\beta_kv+\gamma_k,\, k=1,2,..n$, the $kth$ edge of the convex hull $I_\sigma$. In addition, the direction of the normal vector $(\alpha_k,\beta_k)$ to the line ${{\bar{L}}_{k}}\left( u,v \right)$ is in the convex hull $I_\sigma$ and $(\alpha_k,\beta_k)$ is the shortest normal vector with integer coordinates in that direction.
\end{definition}
 The  Bernstein polynomials    ${\bm{\beta}_{\sigma_i}(u,v)}_{\sigma_i\in\sigma}$  for $\left(u,v\right)$  in the convex hull $I_\sigma$, for toric B\'{e}zier patch can be written as
\begin{equation}\label{bernteinpolynomiallatticepolygon}
{\bm{\beta}_{\sigma_i}(u,v)}=c_{\sigma_i} \{\overline{L}_1(u,v)\}^{\overline{L}_1(\sigma_i)}\{\overline{L}_2(u,v)\}^{\overline{L}_2(\sigma_i)}...\{\overline{L}_n(u,v)\}^{\overline{L}_n(\sigma_i)},
\end{equation}
where positive arbitrary normalizing constants $c_{\sigma_i}$ are  the coefficients of basis functions,  chosen appropriately to get certain desired formulas.  For toric B{\'e}zier patches, the Bernstein polynomials for lattice polygon $\{\bm{\beta}_{\sigma_i}(u,v)\}_{(\sigma_i)\in I_\sigma}$ have the analogous properties as that of classical Bernstein polynomials \eqref{classicalbernpolys2}) for which the classical bivariate functions (eq.~\eqref{classicalbernpolys1} $\{B^{n,m}_{i,j}\}(u,v)$ are
\begin{equation}\label{bernpolynomial}
    B_{i,j}^{n,m}\left( u,v \right)=\binom{n}{i}\binom{m}{j} u^i \left(1-u\right)^{n-i}v^j \left(1-v\right)^{m-j},
\end{equation}
(for $i\in\{0,...,n\},j\in\{0,...,m\}$) used to construct  the triangular or rectangular B{\'e}zier patches. These  Bernstein polynomials $\{\bm{\beta}_{\sigma_i}(u,v)\}_{(\sigma_i)\in I_\sigma}$ (eq.~\eqref{bernteinpolynomiallatticepolygon}) indexed by the set $\sigma$ with  lattice polygon $I_\sigma$ having  corner points $v_1,v_2,...,v_n\,$   satisfy the following properties:  1) $\bm{\beta}_{\sigma_i}(u,v)>0$ inside the lattice polygon $I_\sigma$, 2) $\bm{\beta}_{\sigma_i}(u,v)=0$ on the edge $v_k v_{k+1}$, if and only if $\sigma_i\notin v_k v_{k+1}$, 3) $\bm{\beta}_{\sigma_i}(u,v)=1$ if $\sigma_i=v_k$ and 4) $\{\bm{\beta}_{\sigma_i}(u,v)\}$ are polynomial functions.
\begin{definition}(\textbf{Toric B{\'e}zier Patch})
A toric B{\'e}zier patch is a rational surface $\mathcal{P}(u,v)$ in the real projective space $\mathbb{RP}^4$ of dimension $4$    with control structure consisting of  mass-points $\{(\omega_{\sigma_i} P_{\sigma_i},\omega_{\sigma_i})\}$ indexed by the lattice polygon $I_\sigma$. The  mass-points $\{(\omega_{\sigma_i} P_{\sigma_i},\omega_{\sigma_i})\}$  are four dimensional elements with $\omega_{\sigma_i}$ as  the scaler weights corresponding  to control points $P_{\sigma_i}$ in space. The Bernstein polynomials for lattice polygon $\bm{\beta}_{\sigma_i}(u,v)$ as given in eq.~\eqref{bernteinpolynomiallatticepolygon} are  the blending functions which serve as the basis functions for toric B\'{e}zier patches defined over the domain lattice polygon $I_\sigma$  and they are chosen to obtain the desired shape of the surface.
 \noindent The  toric B{\'e}zier surface $\mathcal P(u,v)$ is defined by the expression
\begin{equation}\label{toricbezierpatch}
\mathcal{P}(u,v)=\sum\limits_{\sigma_i\in I_{\sigma}}\bm{\beta}_{\sigma_i}(u,v)\left(\omega_{\sigma_i} P_{\sigma_i},\omega_{\sigma_i}\right),~~~(u,v)\in I_\sigma,
\end{equation}
\end{definition}
\noindent where Bernstein polynomials $\{\bm{\beta}_{\sigma_i}(u,v)\}_{(\sigma_i)\in I_\sigma}$ are given in eq.~\eqref{bernteinpolynomiallatticepolygon}.
\noindent A rational surface may be obtained by dividing the surface eq.~\eqref{toricbezierpatch} by $\sum\limits_{\sigma_i\in I_{\sigma}}\bm{\beta}_{\sigma_i}(u,v)$ provided that $\sum\limits_{\sigma_i\in I_{\sigma}}\bm{\beta}_{\sigma_i}(u,v)\neq0$, throughout the domain.
Krasauskas and Goldman \cite{KrausGoldman} introduced the concept of depth for toric B{\'e}zier patches which is the analogue of degree used to define the classical higher order B{\'e}zier surfaces. It is based on the depth of lattice polygons defined with the help of repeated Minkowski sums.
\begin{definition}(\textbf{Minkowksi sum})\label{minkowskisum}
Let $A$ and $B$ be any two sets of $p$-tuples. The Minkowski sum $A\oplus B$ of these two sets is the set with the sum of all elements from $A$ and all elements of $B$ given by,
\begin{equation*}
A\oplus B=\{a+b | a\in A, b\in B\}.
\end{equation*}
\end{definition}
\begin{definition}(\textbf{Discrete convolution indexed by Minkowski sum})\label{discreteconvolution}
Let $P=\{P_a| a\in A\}$ and $Q=\{Q_b| b\in B\}$ be two arrays. Then the discrete convolution $P\otimes Q$ indexed by the Minkowksi sum $A\oplus B$ i.e., $P\otimes Q=\{(P\otimes Q)_c | c\in A\oplus B\}$ is defined as
\begin{equation*}
(P\otimes Q)_c=\sum\limits_{a+b=c} P_a Q_b.
\end{equation*}
\end{definition}
\noindent The indexing of discrete convolution indexed by Minkowski sum may be used to define toric B\'{e}zier patches with depth $d$, as given below. The depth $d$ of toric B{\'e}zier patches as expressed by Krasasuskas and Goldman \cite{KrausGoldman} is the analogue of degree used to define the classical higher order B{\'e}zier surfaces. It is based on the depth of lattice polygons defined with the help of repeated Minkowski sums as given above (definitions  \ref{minkowskisum} and \ref{discreteconvolution}).
\begin{definition}(\textbf{Toric B{\'e}zier Patch with depth $d$})
 Let $\sigma^{d}=\overbrace{\sigma \oplus \sigma ...\oplus \sigma}^{d\text{-}fold}$ be the $d$-fold Minkowski sum of $\sigma$ and $I^d$, the corresponding convex hull of $\sigma^{d}$. Then the toric Bernstein basis functions $\{\bm{\beta}^{d}_{\gamma}(u,v)\}_{\gamma\in\sigma^d}$ on $I^d$ are given by convolution of the Bernstein basis function $\{\bm{\beta}_{\sigma}(u,v)={\bm{\beta}_{\sigma_i}(u,v)}\}_{\sigma_i\in\sigma}$ indexed by $\sigma^{d}$, $i.e.$,
\begin{equation}
\{\bm{\beta}^{d}_{\gamma}(u,v)\}_{\gamma\in\sigma^d}=\overbrace{\bm{\beta}_{\sigma}(u,v)\otimes \bm{\beta}_{\sigma}(u,v)\otimes...\otimes\bm{\beta}_{\sigma}(u,v)}^{d\text{-}fold}.
\end{equation}
A toric B{\'e}zier patch defined on lattice polygon of depth $d$ and the corresponding convex hull $I^d$ of $\sigma^{d}$ in the projective space is a surface parameterized by the map $\mathcal{P}: I^d\rightarrow \mathbb{RP}^{4}$ (for $\left(u,v\right)\in I^d$) is defined as,
\begin{equation}\label{polynomialpatch}
\mathcal{P}(u,v)=\sum\limits_{\gamma\in \sigma^d} \bm{\beta}^{d}_{\gamma}(u,v) \left(\omega_\gamma p_\gamma, \omega_\gamma\right),
\end{equation}
 the control structure consists of the mass-points $\left\{\left(\omega_\gamma p_\gamma, \omega_\gamma\right)\right\}_{\gamma\in\sigma^{d}}$, where $\{p_\gamma\}_{\gamma\in\sigma^{d}}$ are the control points and $\{\omega_\gamma\geq 0\}_{\gamma\in\sigma^{d}}$ are the respective weights. ${\mathcal{P}^{d}_{\gamma}(u,v)}_{\gamma\in\sigma^{d}}$ are the blending functions, known as the \textit{toric Bernstein basis functions} for $I^d$.
\end{definition}
The toric B{\'e}zier patches are the rational surfaces lying in the affine or projective spaces. The derivative of a rational surface is not that straightforward in general but rather a little complicated. It is however advantageous  to find  the derivatives of the numerator and denominator parts of the rational surface first and then  to apply  the quotient rule of derivation to get  the derivative of the quotient. Therefore, instead of derivative of the rational toric B{\'e}zier patch, the derivative of the corresponding toric B{\'e}zier surface in the space of mass-points is more useful.
A detailed account of finding derivative of toric B\'{e}zier patch of depth $d$ $w.r.t.$ the surface parameters $u$ and $v$ can be seen in \cite{KrausGoldman}  (pages 82-84).
The partial derivative $w.r.t.$ $u$ of  Bernstein polynomials $\bm{\beta}^{d}_{\gamma}(u,v)$ for lattice polygons of depth $d$ is given by the following expression
\begin{equation}\label{toric1stderivative}
    \frac{\partial \bm{\beta}^{d}_{\gamma}(u,v)}{\partial u}=d \sum\limits_{\sigma_i\in\sigma}\frac{\partial \bm{\beta}_{\sigma_i}(u,v)}{\partial u}\bm{\beta}^{d-1}_{\gamma-\sigma_{i}}(u,v),
\end{equation}
which leads to the first order partial differentiation $w.r.t.$ $u$ of  the polynomial patch $\mathcal{P}(u,v)$ eq.~\eqref{polynomialpatch} and is given by
\begin{equation}
\mathcal{P}_{u}(u,v)=d\sum\limits_{\gamma\in\sigma^d}\left(\sum\limits_{\sigma_i\in\sigma}\frac{\partial \bm{\beta}_{\sigma_i}(u,v)}{\partial u}\bm{\beta}^{d-1}_{\gamma-\sigma_{i}}(u,v)\right)\left(\omega_\gamma p_\gamma, \omega_\gamma\right).
\end{equation}
\noindent The second order partial derivatives of toric B\'{e}zier patch $w.r.t.$ its parameters $u$ and $v$ (later to be used in next section) can be computed and they are
\begin{equation}\label{derivativeuu}
\begin{split}
\mathcal{P}_{uu}(u,v)=&d\sum\limits_{\gamma\in\sigma^d}\left(\sum\limits_{\sigma_i\in\sigma}\frac{\partial^{2} \bm{\beta}_{\sigma_i}(u,v)}{\partial u^2}\bm{\beta}^{d-1}_{\gamma-\sigma_{i}}(u,v)\right)\left(\omega_\gamma p_\gamma, \omega_\gamma\right)\\
+&d(d-1)\sum\limits_{\gamma\in\sigma^d}\left(\sum\limits_{\sigma_i\in\sigma}\frac{\partial \bm{\beta}_{\sigma_i}(u,v)}{\partial u}\left(\sum\limits_{\delta_i\in\sigma,\delta_i\neq\sigma_i}\frac{\partial \bm{\beta}_{\delta_i}(u,v)}{\partial u}\bm{\beta}^{d-2}_{\gamma-\sigma_{i}-\delta_{i}}(u,v)\right)\right)\left(\omega_\gamma p_\gamma, \omega_\gamma\right),
\end{split}
\end{equation}
\begin{equation}\label{derivativevv}
\begin{split}
\mathcal{P}_{vv}(u,v)=&d\sum\limits_{\gamma\in\sigma^d}\left(\sum\limits_{\sigma_i\in\sigma}\frac{\partial^{2} \bm{\beta}_{\sigma_i}(u,v)}{\partial v^2}\bm{\beta}^{d-1}_{\gamma-\sigma_{i}}(u,v)\right)\left(\omega_\gamma p_\gamma, \omega_\gamma\right)\\
+&d(d-1)\sum\limits_{\gamma\in\sigma^d}\left(\sum\limits_{\sigma_i\in\sigma}\frac{\partial \bm{\beta}_{\sigma_i}(u,v)}{\partial v}\left(\sum\limits_{\delta_i\in\sigma,\delta_i\neq\sigma_i}\frac{\partial \bm{\beta}_{\delta_i}(u,v)}{\partial v}\bm{\beta}^{d-2}_{\gamma-\sigma_{i}-\delta_{i}}(u,v)\right)\right)\left(\omega_\gamma p_\gamma, \omega_\gamma\right).
\end{split}
\end{equation}
Above partial derivatives of $\mathcal{P}(u,v)$ are helpful in the extremization of the quasi-harmonic functional used as objective function to obtain quasi-harmonic toric B\'{e}zier patch as the solution of Plateau toric B\'{e}zier problem, the task accomplished  in section~\ref {conditions}. The next section gives a brief description of the energy functionals that can be used as objective functions for extremization purpose to obtain an approximate minimal surface.
\section{Quasi-Harmonic Functional}\label{qfunctional}
To find an approximate minimal surface, several energy functionals have been used instead of area functional itself which involves a square root in its integrand. These functionals may be  extremized to obtain quasi-minimal surfaces with prescribed boundary in general. Following section gives a brief description of different energy functionals which may be used as objective functions to trigger the extremization process for different surfaces along with the quasi-harmonic functional that is used in our next section to obtain a quasi-harmonic B\'{e}zier patch as an  approximate solution to the Plateau-toric B\'{e}zier problem.
In an optimization problem, one needs to  minimize the area functional (eq.~\eqref{areafunctional} for any surface $\mathbf{x}(u,v)$. The area functional of the toric B{\'e}zier surface $\mathcal{P}(u,v)$ is
\begin{equation} \label{areafunctional}
\mathcal{A}(\mathcal{P}(u,v))=\int\limits_{I^d} |\mathcal{P}(u,v)_{u}\times\mathcal{P}(u,v)_{v}| du dv,
\end{equation}
where $ I^d \subset \mathbb{Z}^{2} $ is the parametric domain over which the surface $ \mathcal{P}(u,v) $ is defined as a map and $\mathcal P_{u}(u,v)$ and $\mathcal {P}_{v}(u,v)$ are the partial derivatives of $ \mathcal{P}(u,v) $ with respect to parameters $u$ and $v$. However, the non-linearity of this functional makes it difficult to find the  solution of Plateau problem in general.  Douglus \cite{Douglas} replaced the area functional for a surface $\mathbf{x}(u,v)$ with a relatively easy to manage Dirichlet functional
\begin{equation}
D\left(\mathbf{x}(u,v)\right)=\frac{1}{2}\int\limits_{R}\left(\|\mathbf{x}_u\|^2+\|\mathbf{x}_v\|^2\right)du dv.
\end{equation}
This functional was utilized by Monterde \cite{Monterde2004} to solve the Plateau-B\'{e}zier problem. Sun and Zhu \cite{Sun01062016} found the extremals of toric B\'{e}zier surfaces by minimizing the Dirichlet functional.

Monterde and Ugail \cite{Monterde2006}, in 2006,  introduced a general biquadratic functional
\begin{equation}
\mathcal{L}\left(\mathbf{x}(u,v)\right)=\frac{1}{2}\int\limits_{R}\left(a\|\mathbf{x}_{uu}\|^2+b\langle\mathbf{x}_{uu},\mathbf{x}_{uv}\rangle+c|\mathbf{x}_{uv}\|^2+d\langle\mathbf{x}_{uv},\mathbf{x}_{vv}\rangle+e\|\mathbf{x}_{vv}\|^2\right) du dv,
\end{equation}
with $a,b,c,d$ and $e$ being the real constants. By assigning different values to these constants, the functional could be reduced to other alternative functionals used for minimizing purposes such as  Farin and Hansford functional \cite{farin}, standard biharmonic functional introduced by Schneider and Kobbelt \cite{Schneider2001359} or Bloor and Wilson's modified biharmonic functional \cite{Bloor2005203}.
 The solution of the area problem for B\'{e}zier patches by extremizing the quasi-harmonic functional
\begin{equation}\label{}
\mathcal{H}\left(\mathbf{x}(u,v)\right)=\int\limits_R\left(\mathbf{x}_{uu}+\mathbf{x}_{vv}\right)^2du dv.
\end{equation}
for the surface $\mathbf{x}(u,v)$ is already known \cite{Xu2015}. We choose this quasi-harmonic functional as an objective function to find the solution of Plateau's toric B\'{e}zier problem, as mentioned earlier that the toric B\'{e}zier patches generalize the classical rational triangular and tensor-product B{\'e}zier surfaces defined over multi-sided domains.
It  gives \cite{Xu2015} better approximation of surfaces with lesser area and smaller mean curvature values at arbitrary points when compared to the Dirichlet functional  for B\'{e}zier surfaces. The quasi-harmonic functional, taken as an objective function, for the toric B\'{e}zier patch $\mathcal{P}(u,v)$ (eq.~\eqref{toricbezierpatch}) is given by
\begin{equation}\label{quasi-harmonicfunctional}
\mathcal{H}(\mathcal{P}(u,v))=\int\limits_{I^d} \left(\mathcal{P}_{uu}(u,v)+\mathcal{P}_{vv}(u,v)\right)^{2} dudv,
\end{equation}
where $\mathcal{P}_{uu}(u,v)$ and $\mathcal{P}_{vv}(u,v)$ are given by eqs.~\eqref{derivativeuu} and ~\eqref{derivativevv}.
\noindent In the following section, necessary and sufficient condition for a toric B\'{e}zier patch to be a quasi-harmonic toric B\'{e}zier is computed by extremizing the above mentioned quasi-harmonic functional eq.~\eqref{quasi-harmonicfunctional}.
\section{Quasi-harmonic Toric B{\'e}zier patches for a given boundary}\label{conditions}
For the Plateau Toric B{\'e}zier problem, we minimize the quasi-harmonic functional to get a quasi-harmonic toric B{\'e}zier patch $\mathcal{P}(u,v)$. For this, we find the gradient of the $\mathcal{H}(\mathcal{P}(u,v))$ with respect to the inner unknown mass-points $\left(\omega_\lambda p_\lambda, \omega_\lambda\right)$ and equate it to zero  to find the constraints as linear equations under which the $\mathcal{P}(u,v)$ is quasi-harmonic toric B\'{e}zier patch.
\begin{theorem}
If the mass-points associated to the boundary lattice points of the convex hull $I^d$ of the toric B{\'e}zier patch $\mathcal{P}(u,v)=\sum\limits_{\gamma\in \sigma^d} \bm{\beta}^{d}_{\gamma}(u,v) \left(\omega_\gamma p_\gamma, \omega_\gamma\right)$ are given, the patch $\mathcal{P}(u,v)$ is quasi-harmonic toric B{\'e}zier surface if and only if the inner unknown mass-points $\left(\omega_\lambda p_\lambda, \omega_\lambda\right)$ associated to the lattice points of the convex hull satisfy the following system of  linear equations:
\begin{equation}\label{finalfunctional}
\int\limits_{I^d}\sum\limits_{\gamma\in\sigma^d}
\left((\xi^{\lambda,u}+(d-1)\eta^{\lambda,u})+(\xi^{\lambda,v}+(d-1)\eta^{\lambda,v})\right)
\left((\xi^{\gamma,u}+(d-1)\eta^{\gamma,u})+(\xi^{\gamma,v}+(d-1)\eta^{\gamma,v})\right)(\omega_\gamma p_\gamma, \omega_\gamma) dudv =0,
\end{equation}
where the coefficients $\xi^{\gamma,u}$ and  $\eta^{\gamma,u}$ are,
\begin{equation}\label{xietacoefficients}
\begin{aligned}
&\xi^{\gamma,u}=\sum\limits_{\sigma_i\in\sigma}\frac{\partial^{2} \beta_{\sigma_i}(u,v)}{\partial u^2}\beta^{d-1}_{\gamma-\sigma_{i}}(u,v),\\
&\eta^{\gamma,u}=\sum\limits_{\sigma_i\in\sigma}\frac{\partial \beta_{\sigma_i}(u,v)}{\partial u}\left(\sum\limits_{\delta_i\in\sigma,\delta_i\neq\sigma_i}\frac{\partial \beta_{\delta_i}(u,v)}{\partial u}\beta^{d-2}_{\gamma-\sigma_{i}-\delta_{i}}(u,v)\right).
\end{aligned}
\end{equation}
Other coefficients $\xi^{\gamma,v}$, $\xi^{\lambda,u}$, $\xi^{\lambda,v}$, $\eta^{\gamma,v}$, $\eta^{\lambda,u}$ and $\eta^{\lambda,v}$ are obtained by replacing $u$ by $v$ and  $\gamma$ by  $\lambda$ in above eq.~\eqref{xietacoefficients}.
\end{theorem}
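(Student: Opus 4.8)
The plan is to treat the quasi-harmonic functional $\mathcal{H}(\mathcal{P}(u,v))$ as a quadratic form in the unknown inner mass-points and to impose the first-order optimality (vanishing-gradient) condition. First I would rewrite the integrand of \eqref{quasi-harmonicfunctional} in compact form: substituting the second derivatives \eqref{derivativeuu} and \eqref{derivativevv} together with the coefficient definitions \eqref{xietacoefficients} shows that
\[
\mathcal{P}_{uu}(u,v)+\mathcal{P}_{vv}(u,v)=d\sum_{\gamma\in\sigma^d} C^{\gamma}(u,v)\,(\omega_\gamma p_\gamma,\omega_\gamma),
\]
where I abbreviate $C^{\gamma}(u,v)=(\xi^{\gamma,u}+(d-1)\eta^{\gamma,u})+(\xi^{\gamma,v}+(d-1)\eta^{\gamma,v})$. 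Hence, writing the four-dimensional mass-points as $M_\gamma=(\omega_\gamma p_\gamma,\omega_\gamma)$ and using the Euclidean inner product $\langle\cdot,\cdot\rangle$ on $\mathbb{R}^4$, the functional becomes the double sum $\mathcal{H}(\mathcal{P})=d^2\int_{I^d}\sum_{\gamma,\gamma'}C^{\gamma}C^{\gamma'}\langle M_\gamma,M_{\gamma'}\rangle\,dudv$.

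Next I would differentiate $\mathcal{H}$ with respect to each inner (unknown) mass-point $M_\lambda$, holding the prescribed boundary mass-points fixed. Because the integrand and its derivatives are polynomial in $u,v$ on the compact domain $I^d$, differentiation under the integral sign is legitimate. Setting $S(u,v)=\sum_{\gamma}C^{\gamma}(u,v)M_\gamma$, the chain rule for the squared norm gives $\nabla_{M_\lambda}|S|^2=2\,C^{\lambda}S$, whence
\[
\nabla_{M_\lambda}\mathcal{H}=2d^2\int_{I^d}C^{\lambda}(u,v)\sum_{\gamma\in\sigma^d}C^{\gamma}(u,v)\,(\omega_\gamma p_\gamma,\omega_\gamma)\,dudv.
\]
Equating this gradient to zero for every inner index $\lambda$ and discarding the constant factor $2d^2$ reproduces the linear system \eqref{finalfunctional} once $C^{\lambda}$ and $C^{\gamma}$ are written out via the $\xi,\eta$ coefficients. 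This gives one vector equation per unknown inner mass-point, linear in the unknowns since $S$ depends linearly on the $M_\gamma$; note that the summation index $\gamma$ in \eqref{finalfunctional} still runs over all lattice points, inner and boundary alike, while only the inner ones are being varied.

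The remaining point, and the only one requiring genuine argument, is the equivalence ``quasi-harmonic $\iff$ \eqref{finalfunctional}.'' For the forward direction, being quasi-harmonic means $\mathcal{P}$ extremizes $\mathcal{H}$, so the first-order necessary condition forces the gradient to vanish. The converse needs that a critical point be an actual minimizer: here I would invoke that $\mathcal{H}(\mathcal{P})=d^2\int_{I^d}|S|^2\,dudv\ge 0$ is a non-negative quadratic functional of the mass-points, hence convex with a positive semi-definite Hessian that is independent of the $M_\lambda$; its restriction to the affine subspace on which the boundary data are held fixed remains convex. For a convex quadratic, vanishing of the gradient is both necessary and sufficient for a global minimum, closing the equivalence. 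I expect the main difficulty to be bookkeeping rather than conceptual---correctly tracking the vector-valued (four-component) nature of the gradient and keeping the fixed boundary data separate from the varied inner mass-points---rather than any deep analytic step.
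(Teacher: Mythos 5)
Your proposal is correct and follows essentially the same route as the paper: express $\mathcal{P}_{uu}+\mathcal{P}_{vv}$ through the $\xi,\eta$ coefficients, differentiate the quadratic functional with respect to each inner mass-point, and set the resulting gradient to zero to obtain the linear system \eqref{finalfunctional}. Your closing convexity observation --- that $\mathcal{H}$ is a non-negative quadratic form in the mass-points, so a critical point on the affine subspace of fixed boundary data is automatically a global minimizer --- is a worthwhile addition that the paper leaves implicit when asserting the ``if and only if.''
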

\begin{proof}
The quasi-harmonic functional~\eqref{quasi-harmonicfunctional} can be rewritten as
\begin{equation}\label{quasi-harmonicfunctional1}
\mathcal{H}(\mathcal{P}(u,v))=\int\limits_{I^d} \left\langle \mathcal{P}_{uu}(u,v),\mathcal{P}_{uu}(u,v)\right\rangle+\left\langle \mathcal{P}_{vv}(u,v),\mathcal{P}_{vv}(u,v)\right\rangle+2\left\langle \mathcal{P}_{uu}(u,v),\mathcal{P}_{vv}(u,v)\right\rangle dudv,
\end{equation}
where the operator $\left\langle ,\right\rangle$ denotes the inner product of the two functions. For an inner mass point $\left(\omega_\lambda p_\lambda, \omega_\lambda\right)$, $\lambda\in\sigma^d$ and $a\in\{1,2,3,4\}$, the gradient of the quasi-harmonic functional with respect to the coordinates of $\left(\omega_\lambda p_\lambda, \omega_\lambda\right)$ is given by
\begin{equation}\label{functional}
\begin{split}
\frac{\partial\mathcal{H}(\mathcal{P}(u,v))}{\partial\left(\omega_\lambda p_\lambda, \omega_\lambda\right)^{a}}=
2\int\limits_{I^d} &\left\langle\frac{\partial \mathcal{P}_{uu}(u,v)}{\partial\left(\omega_\lambda p_\lambda, \omega_\lambda\right)^{a}},\mathcal{P}_{uu}(u,v)\right\rangle +\left\langle\frac{\partial \mathcal{P}_{vv}(u,v)}{\partial\left(\omega_\lambda p_\lambda, \omega_\lambda\right)^{a}},\mathcal{P}_{vv}(u,v)\right\rangle\\ +&\left\langle \frac{\partial \mathcal{P}_{uu}(u,v)}{\partial\left(\omega_\lambda p_\lambda, \omega_\lambda\right)^{a}},\mathcal{P}_{vv}(u,v)\right\rangle +\left\langle \frac{\partial \mathcal{P}_{vv}(u,v)}{\partial\left(\omega_\lambda p_\lambda, \omega_\lambda\right)^{a}},\mathcal{P}_{uu}(u,v)\right\rangle dudv.
\end{split}
\end{equation}
Differentiating partially $\mathcal{P}_{uu}(u,v)$ and $\mathcal{P}_{vv}(u,v)$, the $2^{nd}$ order partial derivatives (eqs.~\eqref{derivativeuu} and \eqref{derivativevv} respectively) of the toric B\'{e}zier patch $\mathcal P(u,v)$  $w.r.t.$ the inner mass-point coordinates $\left(\omega_\lambda p_\lambda, \omega_\lambda\right)$ gives us
\begin{equation}\label{derivativeuucntrlpoint}
\begin{split}
&\frac{\partial \mathcal{P}_{uu}(u,v)}{\partial\left(\omega_\lambda p_\lambda, \omega_\lambda\right)^{a}}\\&
=d\left(\sum\limits_{\sigma_i\in\sigma}\frac{\partial^{2} \bm{\beta}_{\sigma_i}(u,v)}{\partial u^2}\bm{\beta}^{d-1}_{\lambda-\sigma_{i}}(u,v)\right)e^a
+d(d-1)\left(\sum\limits_{\sigma_i\in\sigma}\frac{\partial \bm{\beta}_{\sigma_i}(u,v)}{\partial u}\left(\sum\limits_{\delta_i\in\sigma,\delta_i\neq\sigma_i}\frac{\partial \bm{\beta}_{\delta_i}(u,v)}{\partial u}\bm{\beta}^{d-2}_{\lambda-\sigma_{i}-\delta_{i}}(u,v)\right)\right)e^a,
\end{split}
\end{equation}
and
\begin{equation}\label{derivativevvcntrlpoint}
\begin{split}
&\frac{\partial \mathcal{P}_{vv}(u,v)}{\partial\left(\omega_\lambda p_\lambda, \omega_\lambda\right)^{a}}\\
&=d\left(\sum\limits_{\sigma_i\in\sigma}\frac{\partial^{2} \bm{\beta}_{\sigma_i}(u,v)}{\partial v^2}\bm{\beta}^{d-1}_{\lambda-\sigma_{i}}(u,v)\right)e^a
+d(d-1)\left(\sum\limits_{\sigma_i\in\sigma}\frac{\partial \bm{\beta}_{\sigma_i}(u,v)}{\partial v}\left(\sum\limits_{\delta_i\in\sigma,\delta_i\neq\sigma_i}\frac{\partial \bm{\beta}_{\delta_i}(u,v)}{\partial v}\bm{\beta}^{d-2}_{\lambda-\sigma_{i}-\delta_{i}}(u,v)\right)\right)e^a.
\end{split}
\end{equation}
It is to be noted that in above eqs.~\eqref{derivativeuucntrlpoint} and \eqref{derivativevvcntrlpoint}, the coefficients $\bm{\beta}^{d-2}_{\lambda-\sigma_i-\delta_i}(u,v)=0$ if $\lambda-\sigma_i-\delta_i\notin \sigma^{d-2}$, $e^a$ denote the $a^{th}$ vector of the standard basis, $i.e.$ $e^1=\{1,0,0,0\},e^2=\{0,1,0,0\},e^3=\{0,0,1,0\}$ and $e^4=\{0,0,0,1\}$.
Substituting the coefficients $\xi^{\gamma,u}$,$\xi^{\gamma,v}$,$\xi^{\lambda,u}$,$\xi^{\lambda,v}$ and $\eta^{\gamma,u}$,$\eta^{\gamma,v}$,$\eta^{\lambda,u}$ ,$\eta^{\lambda,v}$ (eqs.~\eqref{xietacoefficients}) in eqs.~\eqref{derivativeuu}-\eqref{derivativevv}, we get
\begin{equation}\label{derivativeuu1}
\mathcal{P}_{uu}(u,v)=\,d \sum\limits_{\gamma\in\sigma^d}\xi^{\gamma,u}\left(\omega_\gamma p_\gamma, \omega_\gamma\right)
+d(d-1)\sum\limits_{\gamma\in\sigma^d}\eta^{\gamma,u}\left(\omega_\gamma p_\gamma, \omega_\gamma\right).
\end{equation}
\begin{equation}\label{derivativevv1}
\mathcal{P}_{vv}(u,v)=\,d \sum\limits_{\gamma\in\sigma^d}\xi^{\gamma,v}\left(\omega_\gamma p_\gamma, \omega_\gamma\right)
+d(d-1)\sum\limits_{\gamma\in\sigma^d}\eta^{\gamma,v}\left(\omega_\gamma p_\gamma, \omega_\gamma\right).
\end{equation}
so that the eqs.~\eqref{derivativeuucntrlpoint} and \eqref{derivativevvcntrlpoint} reduce to
\begin{equation}\label{derivativeuucntrlpoint1}
\frac{\partial \mathcal{P}_{uu}(u,v)}{\partial\left(\omega_\lambda p_\lambda, \omega_\lambda\right)^{a}}
=\,d \xi^{\lambda,u}e^a+d(d-1)\eta^{\lambda,u}e^a,
\end{equation}
and
\begin{equation}\label{derivativevvcntrlpoint1}
\frac{\partial \mathcal{P}_{vv}(u,v)}{\partial\left(\omega_\lambda p_\lambda, \omega_\lambda\right)^{a}}
=d \xi^{\lambda,v}e^a+d(d-1)\eta^{\lambda,v}e^a.
\end{equation}
Now substitute eqs.~\eqref{derivativeuu1} to~\eqref{derivativevvcntrlpoint1} in eq.~\eqref{functional} to get
\begin{equation}\label{functional3}
\begin{split}
& \frac{\partial \mathcal{H}(\mathcal{P}(u,v))}{\partial {{\left( {{\omega }_{\lambda }}{{p}_{\lambda }},{{\omega }_{\lambda }} \right)}^{a}}}=\, 2{{d}^{2}}\int\limits_{{{I}^{d}}}{\sum\limits_{\gamma \in {{\sigma }^{d}}}{\left( ({{\xi }^{\lambda ,u}}+(d-1){{\eta }^{\lambda ,u}})+({{\xi }^{\lambda ,v}}+(d-1){{\eta }^{\lambda ,v}}) \right)}} \\
 & \left( ({{\xi }^{\gamma ,u}}+(d-1){{\eta }^{\gamma ,u}})+({{\xi }^{\gamma ,v}}+(d-1){{\eta }^{\gamma ,v}}) \right)({{\omega }_{\gamma }}{{p}_{\gamma }},{{\omega }_{\gamma }})dudv.
\end{split}
\end{equation}
We can now obtain the set of linear system of equations as stated in eq. \eqref{finalfunctional} for which the toric B{\'e}zier patch is quasi-harmonic surface by setting $\frac{\partial\mathcal{H}(\mathcal{P}(u,v))}{\partial\left(\omega_\lambda p_\lambda,\omega_\lambda\right)^{a}}=0$.
\end{proof}
\section{Quasi-Harmonic Toric B{\'e}zier patches over multi-sided Convex hulls}\label{examples}
In this section, we construct toric B{\'e}zier patches over two different convex hulls namely 1) the trapezoidal convex hull and  2) hexagonal convex hull.  We use the linear set of equations given in eq.~\eqref{finalfunctional} to compute the inner unknown mass-points of the toric B\'{e}zier patches spanned by the curves over these convex hulls in order to obtain the associated quasi-harmonic toric B\'{e}zier patch. In the former case we construct the quasi-harmonic toric B\'{e}zier patches for $n=2,m=p=1$ in which we find one condition on the unknown inner mass point and $n=2,m=3,p=1$, we find three conditions on the three unknown inner mass points whereas in the latter case we construct the quasi-harmonic toric B\'{e}zier patch with depth $d=2$, in this case there appear seven unknown inner points in terms of known boundary mass points. To simplify the calculations, the weights $\omega$ are all taken equal.
\subsection{Quasi-harmonic toric B{\'e}zier patches over trapezoidal convex hulls}\label{trep}
The general representation of toric B\'{e}zier patch $B(u,v)$ over a trapezoidal convex hull $I_{\sigma}$ is defined as follows,

Let $n$, $p\geq1$ and $m\geq0$ be integers and set
\begin{equation}\label{trapezoidal}
\sigma=\{(i,j): 0\leq j\leq n,~0\leq i \leq m+pn+pj\}
\end{equation}
be the collection of all the integers lattice points of the trapezoidal convex hull. The corresponding Bernstein polynomial for the trapezoidal convex hull is given as
\begin{equation}
\bm{\beta}_{ij}(u,v)=c_{ij} u^{i}(m+pn-pv-u)^{m+pn-pj-i}v^{i}(n-v)^{n-j}.
\end{equation}
Then the toric B\'{e}zier surface $B(u,v)$ defined over a general trapezoidal hull is expressed as
\begin{equation}
\mathcal{P}(u,v)=\sum\limits_{(i,j)\in I}c_{ij} u^{i}(m+pn-pv-u)^{m+pn-pj-i}v^{i}(n-v)^{n-j}\left(\omega_{ij} P_{ij},\omega_{ij}\right),~~~(u,v)\in I_\sigma.
\end{equation}
\begin{example}
In particular, for $n=2,m=p=1$, the eq.~\eqref{trapezoidal} gives us the following set of integer lattice-points
\begin{equation*}
\sigma=\{(0,0),(1,0),(2,0),(3,0),(0,1),(0,2),(1,2),(2,1),(1,1)\},
\end{equation*}
with only  one inner unknown mass-point $p_{11}$ associated to $\sigma_i=(1,1)$.
\begin{figure}[h!]\centering
  \includegraphics[width=60mm]{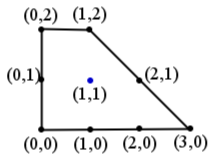}\\
  \caption{Trapezoidal domain with 1 inner lattice point,shown as the blue dot,
  indexing the corresponding unknown mass-point}\label{1.1}
\end{figure}
\noindent The Bernstein polynomials
\begin{equation}\label{beta}
\bm{\beta}_{ij}(u,v)=c_{ij} u^i (2-v)^{2-j} v^j (3-u-v)^{3-i-j},
\end{equation}
for the respective lattice-points come out to be
\begin{equation}
\begin{aligned}
\bm{\beta}_{00}(u,v)&=\frac{1}{108} (2-v)^2 (-u-v+3)^3, &
\bm{\beta}_{10}(u,v)&=\frac{1}{16} u (2-v)^2 (-u-v+3)^2,\\
\bm{\beta}_{20}(u,v)&=\frac{1}{16} u^2 (2-v)^2 (-u-v+3), &
\bm{\beta}_{30}(u,v)&=\frac{1}{108} u^3 (2-v)^2,\\
\bm{\beta}_{01}(u,v)&=\frac{1}{4} (2-v) v (-u-v+3)^2,&
\bm{\beta}_{02}(u,v)&=\frac{1}{4} v^2 (-u-v+3),\\
\bm{\beta}_{12}(u,v)&=\frac{u v^2}{4},&
\bm{\beta}_{21}(u,v)&=\frac{1}{4} u^2 (2-v) v,\\
\bm{\beta}_{11}(u,v)&=u (2-v) v (-u-v+3),
\end{aligned}
\end{equation}
in which  $c_{ij}$ have been chosen appropriately. The toric B\'{e}zier patch over the given trapezoidal convex hull $I_\sigma$, as shown in fig.\ref{1.1} with corresponding Bernstein polynomials defined over lattice points is expressed as
\begin{equation}
\mathcal{P}(u,v)=\sum\limits_{(i,j)\in I}\bm{\beta}_{ij}(u,v)(\omega_{ij} P_{ij},\omega_{ij}),
\end{equation}
where $(u,v)\in I_\sigma$.
We find the constraints for the toric B{\'e}zier patch with unknown inner mass-points to be quasi-harmonic by substituting the second order partial derivative and their gradient with respect to the inner unknown mass-point $p_{11}$ in eq.~\eqref{finalfunctional}. The toric B{\'e}zier patch is quasi-harmonic if and only if the mass-points of the patch satisfy the following constraint equation
\begin{equation}\label{1inner}
p_{11}=0.0904 p_{00}-0.1973 p_{01}+0.01430 p_{02}+0.1970 p_{10}+0.1006 p_{12}+ 0.09269 p_{20}-0.1390 p_{21}+ 0.0438 p_{30}.
\end{equation}
\end{example}
\begin{figure}[h!]\centering
  \includegraphics[width=75mm]{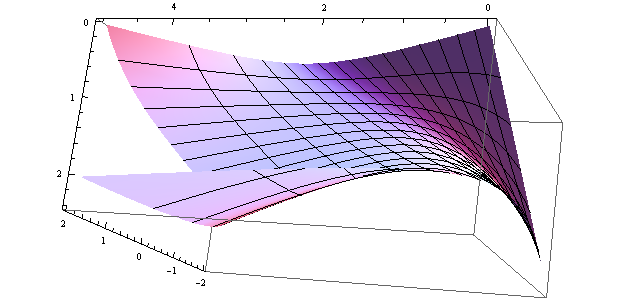}\\
  \caption{A quasi-harmonic toric B\'{e}zier patch with 1 inner lattice point
  indexing the unknown mass-point which is computed by using the eq.~\eqref{1inner}}\label{1.2}
\end{figure}
A particular example of a toric B{\'e}zier patch over trapezoidal convex hull  with 1 unknown inner mass-point is given in figure \ref{1.2} by taking known mass-points on the boundary of the convex hull. The unknown inner mass-point $p_{11}$ is computed by using the result as stated in eq.~\eqref{1inner}.
\begin{example}
For $n=2,m=3,p=1$,the set of integer lattice points is given as
\begin{equation*}
\sigma=\{(0,0),(1,0),(2,0),(3,0),(4,0),(5,0),(0,1),(0,2),(1,2),(2,2),(3,2),(2,1),(3,1),(4,1),(1,1)\}
\end{equation*}
with $3$ inner unknown mass-point $p_{11},p_{21}$ and $p_{31}$.
\begin{figure}[tbh!]\centering
  \includegraphics[width=60mm]{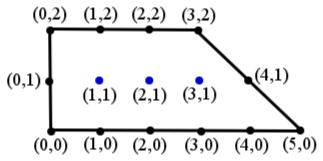}
  \captionof{figure}{Trapezoidal domain with 3 inner lattice point, marked as blue dots, associated to the corresponding unknown mass-points}
  \label{2.1}
\end{figure}
\noindent The Bernstein polynomials
\begin{equation}\label{beta1}
\bm{\beta}_{ij}(u,v)=c_{ij} u^i (2-v)^{2-j} v^j (5-u-v)^{5-i-j},
\end{equation}
for the respective lattice points are
\begin{equation}
\begin{aligned}
\bm{\beta}_{00}(u,v)&=\frac{1}{12500}(2-v)^2 (5-u-v)^5, &
\bm{\beta}_{10}(u,v)&=\frac{1}{2500}u (2-v)^2 (5-u-v)^4, \\
\bm{\beta}_{20}(u,v)&=\frac{1}{1250}u^2 (2-v)^2 (5-u-v)^3, &
\bm{\beta}_{30}(u,v)&=\frac{1}{1250}u^3 (2-v)^2 (5-u-v)^2, \\
\bm{\beta}_{40}(u,v)&=\frac{1}{2500}u^4 (2-v)^2 (5-u-v), &
\bm{\beta}_{50}(u,v)&=\frac{1}{12500}u^5 (2-v)^2, \\
\bm{\beta}_{01}(u,v)&=\frac{1}{512} (2-v) (5-u-v)^4 v, &
\bm{\beta}_{02}(u,v)&=\frac{1}{108} (5-u-v)^3 v^2, \\
\bm{\beta}_{11}(u,v)&=\frac{1}{128} u (2-v) (5-u-v)^3 v, &
\bm{\beta}_{21}(u,v)&=\frac{3}{256} u^2 (2-v) (5-u-v)^2 v, \\
\bm{\beta}_{31}(u,v)&=\frac{1}{128} u^3 (2-v) (5-u-v) v, &
\bm{\beta}_{41}(u,v)&=\frac{1}{512} u^4 (2-v) v, \\
\bm{\beta}_{12}(u,v)&=\frac{1}{36} u (5-u-v)^2 v^2, &
\bm{\beta}_{22}(u,v)&=\frac{1}{36} u^2 (5-u-v) v^2, \\
\bm{\beta}_{32}(u,v)&=\frac{1}{108}(u^3 v^2).
\end{aligned}
\end{equation}
for an appropriate choice of $c_{ij}$.
The toric B\'{e}zier patch over the given convex hull $I_\sigma$ is defined as
\begin{equation}
\mathcal{P}(u,v)=\sum\limits_{(i,j)\in I}\bm{\beta}_{ij}(u,v)(\omega_{ij} p_{ij},\omega_{ij}),
\end{equation}
where $(u,v)\in I_\sigma$.
We can find the constraints for the toric B{\'e}zier patch with unknown inner mass-points to be quasi-harmonic by substituting the second order partial derivatives and their gradient with respect to each unknown inner mass points, namely $p_{11},p_{21}$ and $p_{31}$ in eq.~\eqref{finalfunctional}. The toric B{\'e}zier patch over the given trapezoidal convex hull is quasi-harmonic if and only if the mass-points of the patch satisfy the following system of equations
\begin{equation}\label{3inner}
\begin{split}
p_{11}&=0.8157 p_{00}-1.027 p_{01}+0.3170 p_{02}+0.1815 p_{10}+0.2146 p_{12}+0.1106 p_{20}-0.0981 p_{22}+0.06520 p_{30}\\&+0.1122 p_{32}+0.0210 p_{40}-0.07081 p_{41}+0.02990 p_{50}, \\
p_{21}&=-0.4561 p_{00}+0.5450 p_{01}-0.1718 p_{02}+0.2450 p_{10}+0.0246 p_{12}+0.2527 p_{20}+0.7647 p_{22}+0.03338 p_{30}\\&-0.3465 p_{32}+0.02031 p_{40}+0.2384 p_{41}-0.09169 p_{50}, \\
p_{31}&=0.0871 p_{00}-0.0955 p_{01}+0.0341 p_{02}-0.0211 p_{10}-0.0221 p_{12}+0.1053 p_{20}-0.3099 p_{22}+0.3448 p_{30}\\&+0.6595 p_{32}+0.1629 p_{40}-0.4250 p_{41}+0.2278 p_{50}.
\end{split}
\end{equation}
\end{example}
\begin{figure}[h!]\centering
  \includegraphics[width=60mm]{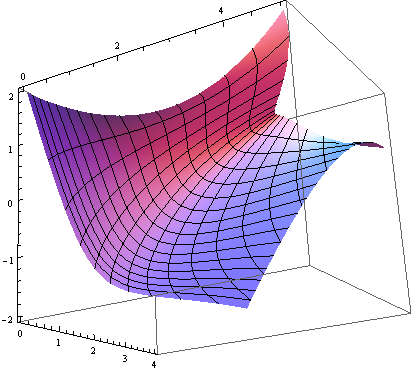}\\
  \caption{A quasi-harmonic toric patch defined over trapezoidal convex hull
   with 3 inner lattice points indexing the unknown mass-points which are computed by using system of eqs.~\eqref{3inner}}\label{2.2}
\end{figure}

\subsection{Quasi-harmonic toric B{\'e}zier patches of depth 2}
Consider a toric B{\'e}zier patch defined over hexagonal convex hull,  shown by the dotted line in fig.~\ref{hex} with lattice-points,
\begin{equation*}
\sigma=\{(0,0),(1,0),(0,1),(1,1),(1,2),(2,1),(2,2)\},
\end{equation*}
where the edges of the hexagonal convex hull $I_{\sigma}$  are
\begin{equation*}
\overline{L}_{1}(u,v)=v;~\overline{L}_{2}(u,v)=-v+2;~\overline{L}_{3}(u,v)=-u+2;~\overline{L}_{4}(u,v)=u;~\overline{L}_{5}(u,v)=v-u+1;~\overline{L}_{6}(u,v)=-v+u+1.
\end{equation*}
The toric Bernstein polynomials for each lattice-point $\sigma_{i}\in\sigma$ can be defined using the following relation
\begin{equation}\label{beta2}
\bm{\beta}_{\sigma_{i}}= c_{\sigma_{i}}\overline{L}_{1}(u,v)^{\overline{L}_{1}({\sigma_{i}})}\overline{L}_{2}(u,v)^{\overline{L}_{2}({\sigma_{i}})}\overline{L}_{3}(u,v)^{\overline{L}_{3}({\sigma_{i}})}\overline{L}_{4}(u,v)^{\overline{L}_{4}({\sigma_{i}})}\overline{L}_{5}(u,v)^{\overline{L}_{5}({\sigma_{i}})}\overline{L}_{6}(u,v)^{\overline{L}_{6}({\sigma_{i}})}.
\end{equation}
Whereas, the Bernstein polynomials $\{\bm{\beta}^d_{\gamma}\}_{\gamma\in I^{d}}$ for the toric B{\'e}zier patch of depth $d=2$ can be computed by convolving the Bernstein polynomials $\bm{\beta}_{\sigma_i}(u,v)$ as stated above in eq.~\eqref{beta2} indexed by the Minkowski sum $\sigma\oplus\sigma=\sigma^{2}$.
The toric B\'{e}zier patch of depth $2$ over the hexagonal convex hull (as shown as solid line in  fig.~\ref{hex}) $I^d$, with corresponding Bernstein polynomials is defined as
\begin{equation}
\mathcal{P}(u,v)=\sum\limits_{\gamma\in \sigma^d} \bm{\beta}^{d}_{\gamma}(u,v) \left(\omega_\gamma p_\gamma, \omega_\gamma\right),
\end{equation}
where $\left(u,v\right)\in I^d$. Similarly, as we already have shown for  the toric B\'{e}zier patches over trapezoidal convex hull, the constraints on the mass-points for this patch can also be computed by using eq.~\eqref{finalfunctional}. The toric B{\'e}zier patch of depth $2$ over the hexagonal convex hull with $7$ unknown inner-mass points is quasi-harmonic if and only if these inner- mass points of the patch satisfy the following linear system of constraints
\begin{figure}[tbh!]
  \includegraphics[width=60mm]{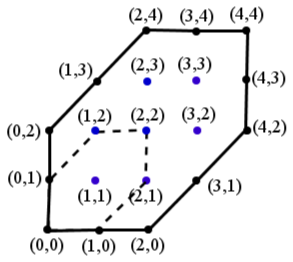}
  \caption{A hexagonal convex hull (solid line) of depth $d=2$ with 19 lattice points indexed by the set $I_{\sigma^2}$ with 7 inner lattice points, marked as blue dots corresponding to the unknown mass-points. The dotted lines represent  the hexagonal hull of $I_{\sigma}$ for toric B\'{e}zier patch of depth $d=1$}\label{hex}
\end{figure}
\begin{equation}
\begin{split}
p_{11}&=-0.1986 p_{00}-0.0264 p_{01}-0.8067 p_{02}+0.1294 p_{10}-5.9296 p_{13}-0.4091 p_{20}-19.2105 p_{24}-5.8759p_{31}\\&-38.9767 p_{34}-41.2599 p_{42} +23.8580 p_{44},\\
p_{21}&=-0.0018 p_{00}+0.0033 p_{01}-0.0582 p_{02}+0.0045 p_{10}-1.9341p_{13}-0.0698 p_{20}-0.9901 p_{24}-0.2803 p_{31}\\&+10.1815 p_{34}-1.1566 p_{42}+2.804 p_{44},\\
p_{12}&=0.1143 p_{00}-0.5075 p_{01}+5.7024 p_{02}-0.3095 p_{10}+25.0159 p_{13}+6.2165 p_{20}+110.1030 p_{24}+25.0099 p_{31}\\&-3.9603 p_{34}+104.0531 p_{42}-142.3752 p_{44},\\
p_{22}&=0.0004 p_{01}-0.0434 p_{02}+0.0034 p_{10}-0.1725 p_{13}-0.0285 p_{20}-0.3504 p_{24}-0.1228 p_{31}-0.41809p_{34}\\&-0.4812 p_{42}-0.0297 p_{44},\\
p_{32}&=-0.0010 p_{00}+0.0020 p_{01}+0.4605 p_{02}-0.0834 p_{10}+2.6567 p_{13}+0.1535 p_{20}+8.5203 p_{24}+1.1316 p_{31}\\&-0.0712 p_{34}+6.7355p_{42}+4.4061 p_{44},\\
p_{23}&=-0.0168 p_{00}+0.0555 p_{01}-0.9415 p_{02}+0.0297 p_{10}-4.0360 p_{13}-0.8251 p_{20}-19.3765 p_{24}-3.0281 p_{31}\\&-2.3478 p_{34}-11.6685 p_{42}+10.8462 p_{44},\\
p_{33}&=0.0005 p_{00}-0.0007 p_{01}-0.0393 p_{02}+0.0060 p_{10}-0.2230 p_{13}-0.0299 p_{20}-0.9192 p_{24}-0.1673 p_{31}\\&+0.2163 p_{34}-0.9267 p_{42}-1.0000 p_{43}-2.9665p_{44}.
\end{split}
\end{equation}
Toric B{\'e}zier patches defined over any polygonal convex hull of domains with prescribed boundary mass-points can be approximated to quasi-harmonic toric B{\'e}zier patch using the result stated in eq.~\eqref{finalfunctional}.
\section{Conclusion}
In this paper, we considered the quasi-Plateau problem which consists of finding the quasi-minimal surface with prescribed entire or partial border. In particular, we find a solution to the \emph{Plateau-toric B{\'e}zier problem} for toric B\'{e}zier surface, which is the generalization of classical rational triangular and tensor-product B{\'e}zier surfaces defined over multi-sided domains. Quasi harmonic functional is used as the objective functional which is extremized to obtain a toric B\'{e}zier patch among all the possible patches with prescribed boundary, which we termed as \emph{quasi-harmonic toric B{\'e}zier patch}. This patch serves as the solution to \emph{quasi Plateau-toric B{\'e}zier problem}. The vanishing condition for gradient of the quasi-harmonic functional yields the constraints on mass-points of the toric B\'{e}zier patch as system of linear equations under which it is a quasi-harmonic toric B\'{e}zier patch. This scheme is applied to toric B\'{e}zier patches for different prescribed borders defined over the multi-sided convex hulls to illustrate its effectiveness and flexibility.
\newpage
\bibliographystyle{unsrt}
\bibliography{ToricBezierSurface}
\end{document}